\newtheorem*{rep@theorem}{\rep@title}
\newcommand{\newreptheorem}[2]{%
\newenvironment{rep#1}[1]{%
 \def\rep@title{#2 \ref{##1}}%
 \begin{rep@theorem}}%
 {\end{rep@theorem}}}
\newcommand{\R}{\mathbb{R}}
\newcommand \B[1][n]{B_2^{#1}}
\newcommand{\N}{\mathbb{N}}
\newcommand{\vol}[1][n]{\operatorname{vol}_{#1}}
\newcommand{\Vol}{\operatorname{vol}_{nm}}
\def \s{\mathbb{S}^{n-1}}
\def \S{\mathbb{S}^{nm-1}}
\newcommand {\conbod}[1][n,m] {\mathcal{K}^{#1}}
\newcommand {\conbodo}[1][n,m] {\mathcal{K}^{#1}_o}
\newcommand {\conbodio}[1][n,m] {\mathcal{K}^{#1}_{(o)}}
\renewcommand{\P}[1][Q]{\Pi_{#1,p}}
\newcommand{\PP}[1][Q]{\Pi_{#1,p}^{\circ}}
\newcommand{\G}[1][Q]{\Gamma_{#1,p}}
\newcommand {\M}[1][n,m] {\operatorname{M}_{#1}(\R)}
\newtheorem{theorem}{Theorem}[section]
\newtheorem{definition}[theorem]{Definition}
\newtheorem{lemma}[theorem]{Lemma}
\newtheorem{proposition}[theorem]{Proposition}
\title[Higher Order Mean Zonoids]{General Higher Order $L^p$ Mean Zonoids}
\author[Langharst]{Dylan Langharst}
\address{Institut de Math\'ematiques de Jussieu \\ Sorbonne Universit\'e \\
Paris, France 75252
\\
and
\\
Department of Mathematical Sciences \\ Kent State University \\ Kent, OH 44242, USA. }
\email{dylan.langharst@imj-prg.fr}
\author[Xi]{Dongmeng Xi}
\address{Department of Mathematics \\ Shanghai University \\ Shanghai 200444, China.
\\ and \\
Institut F\"ur Diskrete Mathematik und Geometrie \\ Technische Universit\"at Wien
\\
1040 Wien, Austria}
\email{dongmeng.xi.math@gmail.com}
\thanks{MSC 2020 Classification: 52A30, 52A40;  Secondary: 28A75.
Keywords: Projection bodies, Centroid bodies, $L^p$ Busemann-Petty centroid inequality, radial mean bodies, mean zonoids}
\begin{document}

\begin{abstract} In 1970, Schneider introduced the higher-order difference body and the associated Rogers-Shephard inequality. Recently, Haddad, Langharst, Putterman, Roysdon and Ye expanded the concept to a burgeoning higher-order Brunn-Minkowski theory. In 1991, Zhang introduced mean zonoids of a convex body, which was extended to the Firey-Brunn-Minkowski theory setting by Xi, Guo and Leng in 2014. In this note, we extend these $L^p$ mean zonoids to the higher-order setting and establish the associated isoperimetric inequality.
\end{abstract}

\maketitle

\newpage
\section{Introduction}

Zhang \cite{Zhang91} defined the mean zonoid of a convex body $K$ (compact, convex set) in $\R^n$ (the $n$-dimensional Euclidean space with standard inner product $\langle \cdot, \cdot\rangle$) via its support function: for $u\in\s$
\begin{equation}
h_{\tilde Z K}(u)=\frac{1}{\vol(K)^2}\int_K\int_K|\langle u,(x-y) \rangle|dxdy.
\label{eq:zhang}
\end{equation}
Zhang showed $\vol(\tilde Z K) \geq \vol(\tilde Z B_K)$, with equality if, and only if, $K$ is an ellipsoid, where $B_K$ is the Euclidean ball of the same volume, i.e. $B_K = \vol(\B)^{-1/n}\vol(K)^{1/n} \B$. Here, $\vol(\cdot)$ denotes the $n$-dimensional Lebesgue measure and $\B$ is the Euclidean ball (with boundary $\s$). We recall that a convex body is uniquely determined by its support function, which, in general, is given by $h_K(u)=\sup_{x\in K}\langle x,u \rangle$. That $\tilde Z K$ is indeed a convex body follows from $h_{\tilde Z K}$ defined in \eqref{eq:zhang} being a $1$-homogeneous, convex function on $\s$, and every such function is the support function of a convex body.

Xi, Guo and Leng \cite{XGL14} defined $L^p$ Mean zonoids as
\begin{equation}h_{\tilde Z_p K}(u)^p=\frac{1}{\vol(K)^2}\int_K\int_K|\langle u,(x-y) \rangle|^pdxdy.
\label{eq:xgl}
\end{equation}
Then, they showed the following relation:
$$\tilde Z_p K = \left(\frac{\vol(R_{n+p}K)}{\vol(K)}\right)^\frac{1}{p}\Gamma_p (R_{n+p} K),$$
where $\Gamma _p$ is the $L^p$ centroid body operator and $R_{n+p} K$ is the $(n+p)$th radial mean body of $K$, elaborated on below.
We remark that these were further generalized to the Orlicz-Brunn-Minkowski setting by Guo, Leng and Du \cite{GLD15}. In this work, we will generalize the $L^p$ mean zonoids to the higher-order setting. 

For our results, we identify $\R^{kl}$, with $k,l\in\N$, as $\M[k,l]$, the real vector space of $k\times l$ matrices. Firstly, we write $\conbod[k,l]$ for the set of convex bodies in $\M[k,l]$. Similarly, we write $\conbodo[k,l]$ for the collection of all such convex bodies which contain the origin and $\conbodio[k,l]$ for those that contain the origin in their interiors. Therefore, we use $v^t.x$ to denote the matrix multiplication of $v^t$ and $x$. This is, of course, an extension of the notation for the matrix multiplication of two matrices $A \in \M$ and $B \in \M[m,k]$ given by $A.B \in \M[n,k]$, and of the transpose of $A$, denoted $A^t$.
The Lebesgue measure in $\M$ is inherited from the natural identification between $\M$ and $\R^{nm}$. If $D \subset \M[n,m]$ is Lebesgue measurable, $A \in \M[n,n]$, and $ B \in \M[m,m]$, then $$\Vol(A.D) = |\det(A)|^m \Vol(D) \ \ \mathrm{and} \Vol(D.B) = |\det(B)|^n \Vol(D),$$ where $\det (A)$ is the determinant of $A$. The polar body of $G\in\conbodio[k,l]$ is
$$G^\circ=\left\{x\in\M[k,l]:h_G(x)\leq 1\right\}.$$
Notice that under these conventions, $G^\circ$ is also in $\M[k,l]$. Additionally,
\begin{equation}
\label{eq:sup_mat}
h_K (A.x) = h_{A^t. K}(x) \quad \text{and} \quad h_K (y.B) = h_{K.B^t}(y)\end{equation} for all $x\in \M[k,m], y \in \M[n,l]$, $K\in \conbod$, $A\in \M[n, k]$ and $B\in \M[l,m]$.

Next, we recall the higher-order centroid and radial mean bodies, introduced in \cite{HLPRY23,HLPRY23_2}.
\begin{definition}
  \label{d:generalcentroidbody}
  Let $p \geq 1$, $m \in \N$, and fix some $Q \in \conbodo[1,m]$. Given a compact set $L\subset \M$ with positive volume, we define the $(L^p, Q)$-centroid body of $L$, $\G L$, to be the convex body in $\M[n,1]$ with the support function 
  \begin{equation}
  h_{\G L}(v)^p= \frac 1 {\Vol(L)}\int_L h_Q(v^t.x)^p dx.
  \label{eq:cen_hi}
\end{equation}
\end{definition}
The $L^p$ centroid body operator, introduced by Lutwak and Zhang \cite{LZ97}, is then, up to different normalization constants, $\Gamma_p = \Gamma_{[-1,1],p}$. The classical centroid body operator is then $\Gamma=\Gamma_1$. One can readily verify that, for $T\in GL_n(\R)$ and a compact set $L\subset \M$ with positive volume, one has
    \begin{equation}
        \G  (T. L) = T.(\G  L). 
        \label{trans_cent}
    \end{equation}
By sending $p\to\infty,$ we have, for a compact domain $L\subset\M$ with positive volume, that $\Gamma_{Q,\infty} L$ is a convex body in $\M[n,1]$ defined via the support function
\begin{equation}
\label{eq:higher_order_L}
h_{\Gamma_{Q,\infty} L}(\xi)=\max_{x\in L}h_Q(\xi^t.x). \end{equation}

Let $\rho_M(u)=\sup\{r>0:ru\in M\}$ be the radial function of a compact set $M\subset\M[n,1]$ containing the origin in its interior, which is well-defined and continuous as a function on $\s$ when, e.g., $M$ is also convex. Then, for $K\in\conbod[n,1]$, its $q$th higher-order radial mean body $R^m_q K$ is given by, for $q>0$,
\begin{equation}\rho_{R^m_q K}^q (u)=\frac{q}{\vol(K)}\int_0^\infty g_{K,m}(tu)t^{q-1}dt = \frac{q}{\vol(K)}\int_0^{\rho_{D^m(K)}(u)} g_{K,m}(tu)t^{q-1}dt.
\label{eq:layer}
\end{equation}
Here, $$g_{K,m}(x)=\vol\left(K\cap_{i=1}^m (K+x_i)\right)$$ is the higher-order covariogram of $K$ and $D^m(K)=\text{supp}(g_{K,m})$ is the $m$th higher order difference body of $K$. From continuity, $R^m_\infty K = D^m(K)$. One can use Fubini's theorem to write, with $\theta=(\theta_1,\dots,\theta_m)\in \S\subset\M$,
\begin{equation}
\label{eq:radial_og}
    \rho_{R_q^{m} K}(\theta)^q=\frac{1}{\vol(K)} \int_K \left(\min _{i=1 \ldots m}\left\{\rho_{K-x}(-\theta_i)\right\}\right)^q d x.
\end{equation}
Using \eqref{eq:radial_og}, the bodies $R^m_q K$ also exist for $q\in (-1,0]$, with $q=0$ following from continuity. There is a ``layer cake" formula analogous to \eqref{eq:layer} for $q\in (-1,0)$, but we will not make use of it in this work. Notation-wise, $R_q K:=R^1_q K$. Schneider was the first \cite{Sch70} to introduce $D^m(K)$, where he established the higher-order Rogers-Shephard inequality
$$\frac{\Vol(D^m(K))}{\vol(K)^m}\leq \binom{nm+n}{n},$$
with equality if, and only if, $K$ is an $n$-dimensional simplex. This, of course, extends the $m=1$ case by Rogers and Shephard \cite{RS57}. When $m=1$ and $n$ is arbitrary, or $m\geq 2$ and $n=1,2$, the affine-invariant quantity $\frac{\Vol(D^m(K))}{\vol(K)^m}$ is minimzed for any symmetric $K$. For $m\geq 2, n\geq 3$, \textit{Schneider's conjecture} is that the quantity is minimized by ellipsoids.

We say a compact set $L$ in $\M[n,1]$ is a star body if it contains the origin, has a continuous radial function as a function on $\s$, and $[0,x]\subset L$ for every $x\in L$. Notice that $c\rho_L=\rho_{cL}$ for $c >0$. This yields that $R_q^{m} cK=c R_q^{m} K$ from \eqref{eq:radial_og}. It was shown in \cite{HLPRY23} that the radial mean bodies satisfy the following pair of set inclusions:  fix $n,m\in\N$ and $Q\in\conbodo[1,m]$. Then, for every $K\in\conbod[n,1]$ and $-1<p\leq q <\infty$ one has the following strict set inclusions
\begin{equation}
\label{eq:radial_jensen}
R^m_pK \subset R^m_qK \subset D^m(K).\end{equation}
Additionally,
\begin{equation}D^m (K) \subseteq {\binom{q+n}{n}}^{\frac{1}{q}} R^m_{q}K \subseteq {\binom{p+n}{n}}^{\frac{1}{p}} R^m_{p} K\subseteq n\vol(K)\Pi^{\circ,m} K,
\label{eq:higher_simplex}
\end{equation}
with equality in any set inclusion if, and only if, $K$ is an $n$-dimensional simplex. The case $m=1$ was originally shown by Gardner and Zhang \cite{GZ98}; in fact, they first introduced the radial mean bodies $R_q K$ in the same work. Here $\Pi^{\circ,m} K = (\Pi^{m} K)^\circ$ and $\Pi^m K$ is the convex body in $\M[n,m]$ given by the following definition.
\begin{definition} \label{d:generalprojectionbody} Let $p \geq 1$, $m \in \N$, and fix $Q \in \conbodo[1,m]$. Given $K \in \conbodo[n,1]$ with the property that $\sigma_{K,p}$ is a finite Borel measure on $\s$ (e.g. $K\in\conbodio[n,1]$), we define the $(L^p, Q)$-projection body of $K$, $\P K $ via the support function 
\[
h_{\P K}(x)^p = \int_{\s} h_Q(u^t.x)^p d\sigma_{K,p}(u)
\]
for $x\in\M$.
\end{definition}
The body $\Pi^m K$ is then $\Pi^m K=\Pi_{-{\Delta_m},1} K$, where $\Delta_m$ is the orthogonal simplex, i.e. the convex hull of $\{o,e_1,\dots,e_m\}$ in $\M[1,m]$. Here, $d\sigma_{K,p}(u)=h_K(u)^{1-p}d\sigma_K(u)$ and $d\sigma_K(u)$ is surface area measure of $K$, i.e. the pushforward of the $(n-1)$ dimensional Hausdorff measure from $\partial K$, the boundary of $K$, to the unit sphere $\s$ under the Gauss map $n_K$.

In fact, the operator $\G$ can be extended to compact domains, in which case the following theorem was proven \cite[Theorem 1.6]{HLPRY23_2}.

\begin{theorem}
    \label{t:NewLpBPC} Fix $n,m \in \N$. 
Let $L\subset \M[n,m]$ be a compact domain with positive volume, $Q \in \conbodo[1,m]$, and $p \geq 1$. Then
    \begin{equation}\label{eq:NewLpBPC}
     \frac{\vol(\G L)}{\Vol(L)^{1/m}} \geq \frac{\vol(\G \PP \B)}{\Vol(\PP \B)^{1/m}}.
\end{equation}
If $L$ is a star body or a compact domain with piecewise smooth boundary, then there is equality if, and only if, $L = \PP E$ up to a set of zero volume for some origin symmetric ellipsoid $E \in \conbodio[n,1]$.
\end{theorem}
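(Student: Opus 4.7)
The plan is to mirror the strategy behind the classical $L^p$ Busemann--Petty centroid inequality of Lutwak and Zhang, adapted to the matrix setting $\M$. I would first leverage the invariances of the ratio $\vol(\G L)/\Vol(L)^{1/m}$: by \eqref{trans_cent} together with $\Vol(T.L) = |\det T|^m \Vol(L)$ for $T \in GL_n(\R)$, the ratio is invariant under left multiplication by $GL_n(\R)$ on $L$, and it is scale invariant by a direct homogeneity check in \eqref{eq:cen_hi}. This reduces the problem to a canonical normalization, for instance $\Vol(L) = \Vol(\PP \B)$ with $\G L$ placed in isotropic or John position.

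The core step is a Steiner-symmetrization estimate lifted to $\M$. For a unit vector $u \in \s$, I would define a column-wise symmetrization $S_u L \subset \M$ by symmetrizing each $n$-dimensional slice (obtained by fixing the remaining $m-1$ columns of $x \in L$) through the hyperplane $u^\perp \subset \R^n$. The key lemma is the inclusion $\G(S_u L) \subseteq T_u(\G L)$, where $T_u$ denotes ordinary Steiner symmetrization in $\R^n$ along $u$. Unwinding \eqref{eq:cen_hi} and applying Fubini column by column, this reduces to a one-dimensional rearrangement inequality of Brascamp--Lieb--Luttinger type for the integrand $h_Q(v^t.x)^p$ viewed as a function of the $j$-th column of $x$, exploiting the convexity of $h_Q$ and of $r \mapsto r^p$ for $p \geq 1$. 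Iterating $S_u$ over a dense sequence of directions produces a sequence of bodies for which $\Vol(L)$ is preserved while $\vol(\G L)$ weakly decreases.

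The main obstacle, distinguishing this from the $m=1$ case, is that the predicted extremizer is $\PP E$ rather than an ellipsoid: standard Steiner symmetrization drives bodies towards Euclidean balls, not polar projection bodies. To reconcile this, I would carry out a direct integral computation showing that $\G(\PP E)$ is a scalar multiple of $E$ for every origin-symmetric ellipsoid $E \in \conbodio[n,1]$; this identification rests on Definition~\ref{d:generalprojectionbody} together with a Fubini manipulation realizing $\P$ and $\G$ as adjoint operations in a suitable $L^p$ pairing. Granted this, the symmetrization chain converges, modulo the $GL_n(\R)$-action, to $\PP E$ for some ellipsoid $E$, and the equality characterization is recovered by tracking equality through each one-dimensional rearrangement step. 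The hypothesis that $L$ be a star body or a compact domain with piecewise smooth boundary in the equality clause reflects precisely where the rearrangement equality case can be analyzed rigorously; for arbitrary compact domains one would only obtain the inequality without strict uniqueness.
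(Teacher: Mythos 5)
The paper does not prove Theorem~\ref{t:NewLpBPC}: it quotes it directly from reference \cite{HLPRY23_2} (Theorem~1.6 there), so there is no in-paper argument against which to measure your proposal. That said, your sketch has a structural gap that would sink it for $m\geq 2$, which I will describe.

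Your strategy is to iterate \emph{column-wise} Steiner symmetrizations $S_u$ of $L\subset\M$, viewing $\M$ as $\R^{nm}$, and then pass to the limit. Iterated ordinary Steiner symmetrization in $\R^{nm}$ (whether column by column or along a dense set of directions in $\R^{nm}$) drives $L$, after volume normalization, to the Euclidean ball $B_2^{nm}$. But the claimed extremizer $\PP\B$ is \emph{not} a Euclidean ball in $\M$ when $m\geq 2$. For instance, with $Q=[-1,1]^m$ and $p=1$ one computes $h_{\Pi_{Q,1}\B}(x)=c\sum_{j=1}^m\|x_j\|_2$ (where $x_j$ denotes the $j$-th column), so $\PP\B$ is an $\ell^\infty$-sum of $n$-balls, a cube-like product, not the round ball. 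Thus your symmetrization chain simply converges to the wrong object. The reconciliation you propose, namely a direct computation that $\G(\PP E)$ is a multiple of $E$, is true but irrelevant here: it identifies $\G$ evaluated at $\PP E$, but it does not change what the limit of your symmetrization sequence is, nor does it explain why $\vol(\G B_2^{nm})/\Vol(B_2^{nm})^{1/m}$ should coincide with $\vol(\G\PP\B)/\Vol(\PP\B)^{1/m}$. In fact, for $m\geq 2$ the ball $B_2^{nm}$ is generically a strict non-minimizer, so you would end up with a genuinely weaker inequality than \eqref{eq:NewLpBPC}. A second, related problem is the proposed ``column by column'' Fubini decoupling: the integrand $h_Q(v^t.x)^p$ couples all $m$ entries of the row vector $v^t.x=(\langle v,x_1\rangle,\dots,\langle v,x_m\rangle)$ through $h_Q$, so for general $Q$ (not a box) one cannot rearrange one column at a time and expect a one-dimensional Brascamp--Lieb--Luttinger step to control the change in $h_{\G L}$. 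The symmetrization that actually respects the higher-order structure acts on the $m$-dimensional fiber $\{x'+u.s: s\in\M[1,m]\}$ \emph{simultaneously} (a higher-order Steiner symmetral in the sense of Haddad's matrix Rogers--Brascamp--Lieb--Luttinger inequality), not on one column at a time, and the argument is then designed so that the limit class consists precisely of bodies of the form $\PP E$, rather than Euclidean balls. For $m=1$ your outline collapses to the correct Lutwak--Yang--Zhang template, but for $m\geq 2$ the approach as written would not reach \eqref{eq:NewLpBPC}.
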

We remark that, by setting $Q=[-1,1]$, Theorem~\ref{t:NewLpBPC} yields the classical result from Busemann and Petty \cite{Busemann53,petty61_1} when $p=1$ and the $L^p$ analogue proven by Lutwak, Yang and Zhang \cite{LYZ00} when $p>1.$ By setting $Q = [-\alpha_1, \alpha_2]$, Theorem~\ref{t:NewLpBPC} becomes the asymmetric $L^p$ case by Haberl and Schuster \cite{HS09}. 

\begin{definition}
    Let $n,m \in \N$, $K\in\conbod[n,1]$ and $Q\in\conbodo[1,m]$. Then, $Z^m_p (K,Q)$, the $(L^p,Q)$ mean zonoid of $K$ with respect to $Q$, is given by
    \begin{equation}
Z^m_p (K,Q) = \left(\frac{\Vol(R^m_{nm+p} K)}{\vol(K)^m}\right)^\frac{1}{p} \G R_{nm+p}^m K.
\label{eq:def}
\end{equation} 
\end{definition}
We remark that $R_q^m K$ existing for $q\in (-1,\infty)$ suggests that there is possibility of $Z^m_p (K,Q)$ being well defined for $p \geq -1-nm$. However, this would also require analyzing $\G$ for negative $p$, which we save for a later work. Therefore, we will consider only $p>0$. Let's derive a formula for the support function of these convex bodies. We obtain from polar coordinates and the homogeneity of support functions that
\begin{align*}
    h^p_{\G L}(\theta)&=\frac{1}{\Vol(L)}\int_{\S}\int_0^{\rho_{L}(u)}h_Q(\theta^t.u)^pt^{nm+p-1}dtdu
    \\
    &=\frac{1}{nm+p}\frac{1}{\Vol(L)}\int_{\S}h_Q(\theta^t.u)^p\rho_L(u)^{nm+p}du.
\end{align*}
We then set $L= R^m_{nm+p} K$ and obtain
\begin{align*}
    h^p_{\G R^m_{nm+p} K}(\theta)&=\frac{1}{\Vol(R^m_{nm+p} K)\vol(K)}\int_{\S}h_Q(\theta^t.u)^p\int_0^{\rho_{D^m(K)}(u)} g_{K,m}(tu)t^{nm+p-1}dt du
    \\
    &=\frac{1}{\Vol(R^m_{nm+p} K)\vol(K)}\int_{\S}\int_0^{\rho_{D^m(K)}(u)}h_Q(\theta^t. tu)^p g_{K,m}(tu)t^{nm-1}dt du
    \\
    &=\frac{1}{\Vol(R^m_{nm+p} K)\vol(K)}\int_{D^m(K)}g_{K,m}(x)h_Q(\theta^t.x)^p  dx.
\end{align*}
Consequently, we obtain
\begin{equation}
\label{eq:pretty}
    h^p_{Z^m_p (K,Q)}(\theta)=\frac{1}{\vol(K)^{m+1}} \int_{D^m(K)}g_{K,m}(x)h_Q(\theta^t.x)^p  dx.
\end{equation}
However, notice we can write this as, using Fubini's theorem,
\begin{align*}
    h^p_{Z^m_p (K,Q)}(\theta)&=\frac{1}{\vol(K)^{m+1}} \int_{\M}\int_K\prod_{i=1}^n\chi_{K+x_i}(y)dyh_Q(\theta^t.x)^p  dx
    \\
    &=\frac{1}{\vol(K)^{m+1}} \int_K\int_{\M[n,1]}\chi_{K}(y-x_1)\cdots\int_{\M}\chi_{K}(y-x_m)h_Q(\theta^t.x)^pdx_m\cdots dx_1 dy.
\end{align*}
Then, setting $z_i=y-x_i$ and $y=z_0$ for aesthetics yields
\begin{equation}
\label{eq:final_form}
\begin{split}
    h^p_{Z^m_p (K,Q)}(\theta)&=\frac{1}{\vol(K)^{m+1}} \int_{K}\cdots \int_{K} h_Q(\theta^t.\{z_0-z_i\}_{i=1}^m)^pdz_m\cdots dz_1 dz_0
    \\
    &=\frac{1}{\vol(K)^{m+1}}  \int_{K}\cdots \int_{K} h_{Q^t}(\left(\{z_0-z_i\}_{i=1}^m\right)^t.\theta)^pdz_m\cdots dz_1 dz_0,
    \end{split}
\end{equation}
which is the $(L^p,Q)$ higher-order analogue of \eqref{eq:zhang}.
In \eqref{eq:final_form}, $\{z_0-z_i\}_{i=1}^m$ is the $n\times m$ matrix whose $i$th column is the vector $z_0-z_i$, and we used the fact that $h_A(x) = h_{A^t}(x^t)$; note that $A^t=\{x^t:x\in A\}$.

Our first step is to establish how $Z_p^m(K,Q)$ behaves under affine transformations.
\begin{proposition}
\label{p:affine}
    Fix $p\geq 1$ and $Q\in\conbodo[1,m]$. Then, for every $K\in\conbod[n,1]$ and $A\in GL_n(\R)$, one has
    $$Z_p^m(A.K,Q)=A.Z^m_p (K,Q).$$
\end{proposition}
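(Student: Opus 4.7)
The plan is to derive the identity directly from the explicit formula \eqref{eq:final_form} for the support function of $Z_p^m(K,Q)$ via a linear change of variables. The fact that the integrand depends only on the differences $z_0 - z_i$ is what makes the argument clean, and the $(m+1)$-fold product of the Jacobian should exactly balance against the factor $\vol(A.K)^{m+1} = |\det A|^{m+1}\vol(K)^{m+1}$ in the normalization.

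Concretely, I would fix $\theta \in \M[n,1]$ and start with
\begin{equation*}
 h^p_{Z^m_p (A.K,Q)}(\theta)=\frac{1}{\vol(A.K)^{m+1}} \int_{A.K}\cdots \int_{A.K} h_{Q}\bigl(\theta^t.\{z_0-z_i\}_{i=1}^m\bigr)^p\,dz_m\cdots dz_0.
\end{equation*}
Then I would substitute $z_i = A.w_i$ for $i=0,1,\dots,m$. Each of the $m+1$ variables contributes a factor $|\det A|$ to the Jacobian, and by linearity $A.w_0 - A.w_i = A.(w_0 - w_i)$, so the columns of the matrix $\{z_0-z_i\}_{i=1}^m$ are transformed as $A.\{w_0-w_i\}_{i=1}^m$. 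Combined with $\theta^t. A = (A^t.\theta)^t$, this yields
\begin{equation*}
 h^p_{Z^m_p (A.K,Q)}(\theta) = \frac{1}{\vol(K)^{m+1}} \int_{K}\cdots \int_{K} h_{Q}\bigl((A^t.\theta)^t.\{w_0-w_i\}_{i=1}^m\bigr)^p\,dw_m\cdots dw_0 = h^p_{Z^m_p(K,Q)}(A^t.\theta).
\end{equation*}

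Finally, invoking the first identity in \eqref{eq:sup_mat} with $K$ replaced by $Z^m_p(K,Q)$ gives $h_{Z^m_p(K,Q)}(A^t.\theta) = h_{A.Z^m_p(K,Q)}(\theta)$, and since a convex body is determined by its support function, the claimed equality $Z^m_p(A.K,Q) = A.Z^m_p(K,Q)$ follows.

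I do not anticipate a genuine obstacle; the only detail that requires care is bookkeeping the transpose, i.e.\ tracking that $\theta^t. A$ acts as $(A^t.\theta)^t$ on a column-valued matrix so that \eqref{eq:sup_mat} applies cleanly. Everything else is a linear change of variables matched against the $(m+1)$-fold determinant factor coming from the $m+1$ copies of $K$ in the integral representation \eqref{eq:final_form}.
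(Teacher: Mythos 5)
Your proof is correct and takes essentially the same route as the paper's: start from \eqref{eq:final_form}, substitute $z_i = A.w_i$ for each of the $m+1$ variables so the $|\det A|^{m+1}$ Jacobian cancels the normalization $\vol(A.K)^{m+1}$, commute $A$ through the column differences, and finish with \eqref{eq:sup_mat}. The only difference is that you spell out the Jacobian bookkeeping a bit more explicitly than the paper does.
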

\begin{proof}
    From \eqref{eq:final_form}, we can write
\begin{align*}
    h^p_{Z^m_p (A.K,Q)}(\theta)    &=\frac{1}{\vol(AK)^{m+1}}\int_{AK}\cdots\int_{AK}h_{Q}(\theta^t.\left(\{z_0-z_i\}_{i=1}^m\right))^pdz_m\cdots dz_1 dz_0
    \\
    &=\frac{1}{\vol(K)^{m+1}}\int_{K}\cdots\int_{K}h_{Q}(\theta^t.\left(\{A.(z_0-z_i)\}_{i=1}^m\right))^pdz_m\cdots dz_1 dz_0
    \\
    &=\frac{1}{\vol(K)^{m+1}}\int_{K}\cdots\int_{K}h_{Q}(\theta^t.A.\left(\{z_0-z_i\}_{i=1}^m\right))^pdz_m\cdots dz_1 dz_0
    \\
    &=\frac{1}{\vol(K)^{m+1}}\int_{K}\cdots\int_{K}h_{Q}((A^t.\theta)^t.\left(\{z_0-z_i\}_{i=1}^m\right))^pdz_m\cdots dz_1 dz_0
    \\
    &=h^p_{Z^m_p (K,Q)}(A^t.\theta)=h^p_{A.Z^m_p (K,Q)}(\theta).
    \end{align*}
\end{proof}
A special case of Proposition~\ref{p:affine} is that, for $t>0$, one has $Z_p^m(tK,Q)=tZ^m_p (K,Q).$ We obtain via Jensen's inequality and \eqref{eq:pretty} that, for $0 < p \leq q$,
\begin{equation}
    Z^m_p (K,Q) \subset Z^m_q (K,Q) \subset \Gamma_{Q,\infty} D^m (K).
\end{equation}
On the other-hand, it is clear that $\G$, as an operator, is monotonic with respect to set-inclusion. Thus, for every fixed $p\in(0,\infty)$, we  have from \eqref{eq:radial_jensen}
\begin{equation}
    \label{eq:symmetric}
    Z^m_p (K,Q) \subseteq \left(\frac{\Vol(R^m_{nm+p} K)}{\vol(K)^m}\right)^\frac{1}{p} \G D^m(K) \subseteq \left(\frac{\Vol(D^m(K))}{\vol(K)^m}\right)^\frac{1}{p} \G D^m(K),
\end{equation}
and, from \eqref{eq:higher_simplex},

\begin{equation}
\label{eq:simplex_2}
\begin{split}
\G D^m (K) &\subseteq {\binom{n(m+1)+p}{nm+p}}^{\frac{1}{nm+p}}  Z^m_p (K,Q) 
\\
&\subseteq \left(\frac{\Vol(R^m_{nm+p} K)}{\vol(K)^m}\right)^\frac{1}{p} n\vol(K)\G\Pi^{\circ,m} K.
\end{split}
\end{equation}
We can then use the set-inclusion \eqref{eq:higher_simplex} again on $\Vol(R^m_{nm+p} K)$ to remove the instance of the radial mean bodies on one side of the inclusion \eqref{eq:simplex_2} and obtain
\begin{equation}
\begin{split}
Z^m_p (K,Q) \subseteq [n\vol(K)]\left[\frac{\vol(K)^{m(n-1)}\Vol(\Pi^{\circ,m} K)}{\frac{1}{n^{nm}}\binom{n(m+1)+p}{nm+p}}\right]^\frac{1}{p}\G\Pi^{\circ,m} K.
\end{split}
 \label{eq:simplex_3}
\end{equation}
There is equality in \eqref{eq:simplex_2} and \eqref{eq:simplex_3} if, and only if, $K$ is an $n$-dimensional simplex.

We next establish continuity of $Z_p^m(\cdot,Q)$ as an operator on $\conbod[n,1]$.
\begin{proposition}\label{prop-conv}
    Fix $p\geq 1$ and $Q\in\conbodo[1,m]$. Let $K\in\conbod[n,1]$ and take a sequence $\{K_i\}\subset \conbod[n,1]$ such that $K_i\to K$ in the Hausdorff metric. Then, $Z_p^m(K_i,Q)\to Z_p^m(K,Q)$ in the Hausdorff metric.
\end{proposition}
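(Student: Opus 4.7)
The plan is to exploit the integral representation \eqref{eq:final_form} of $h_{Z_p^m(K,Q)}$ together with the standard fact that convergence in the Hausdorff metric is equivalent to uniform convergence of the support functions on $\s$. It therefore suffices to establish $h_{Z_p^m(K_i,Q)}(\theta) \to h_{Z_p^m(K,Q)}(\theta)$ uniformly for $\theta\in\s$.

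First I would set up a common domain of integration. Since $K_i \to K$ in the Hausdorff metric, there exists $R > 0$ with $K_i,K \subset R\B$ for all $i$, and since $K \in \conbod[n,1]$ has nonempty interior, $\vol(K) > 0$; by continuity of Lebesgue measure on Hausdorff-convergent sequences of convex bodies, $\vol(K_i) \to \vol(K)$, so the prefactor $\vol(K_i)^{-(m+1)}$ is bounded in $i$ and converges. Writing the support function as
\[
h^p_{Z^m_p(K_i,Q)}(\theta) = \frac{1}{\vol(K_i)^{m+1}} \int_{(R\B)^{m+1}} \prod_{j=0}^{m} \chi_{K_i}(z_j)\, h_Q\!\left(\theta^t.\{z_0 - z_j\}_{j=1}^m\right)^p dz_0 \cdots dz_m,
\]
I would then apply the dominated convergence theorem: since $\partial K$ has Lebesgue measure zero (because $K$ has interior), Hausdorff convergence gives $\chi_{K_i}(z) \to \chi_K(z)$ for almost every $z$, hence the product of indicators converges a.e.\ to $\prod_j \chi_K(z_j)$. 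The integrand is uniformly bounded by a constant depending only on $R$, $Q$, and $p$, because $h_Q$ is continuous, hence bounded, on the compact set of $1\times m$ matrices of the form $\theta^t.\{z_0 - z_j\}_{j=1}^m$ with $\theta\in\s$ and $z_0,\ldots,z_m\in R\B$. Dividing by the converging volumes then yields pointwise convergence $h_{Z_p^m(K_i,Q)}(\theta) \to h_{Z_p^m(K,Q)}(\theta)$ for each $\theta\in\s$.

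To upgrade pointwise to uniform convergence on $\s$, I would observe that the same bound shows that $h_{Z_p^m(K_i,Q)}$ is uniformly bounded on $\s$, so the bodies $Z_p^m(K_i,Q)$ lie in a common Euclidean ball. Pointwise convergence of support functions of a sequence of convex bodies contained in a fixed ball is equivalent to Hausdorff convergence, since such support functions are uniformly Lipschitz (with constant controlled by the common circumradius), so an Arzel\`a--Ascoli argument promotes pointwise to uniform convergence. The only nontrivial point is the a.e.\ convergence of the indicator functions; this is standard for Hausdorff-convergent sequences of convex bodies with nonempty limit interior, and is the one place where the assumption that $K$ is a genuine convex body is actively used. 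Everything else is a clean application of dominated convergence.
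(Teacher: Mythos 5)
Your proposal is correct and follows essentially the same route as the paper: both start from the representation \eqref{eq:final_form}, rewrite the integrand as a product of indicator functions over a common bounded domain, and reduce the claim to the convergence of those indicators. The only real difference is the closing mechanism. The paper bounds the difference
\[
\bigl|\vol(K)^{m+1}h^p_{Z^m_p (K,Q)}(\theta)-\vol(K_i)^{m+1}h^p_{Z^m_p (K_i,Q)}(\theta)\bigr|
\]
directly by telescoping through the product $\chi_K(z_0)\cdots\chi_K(z_m)-\chi_{K_i}(z_0)\cdots\chi_{K_i}(z_m)$, using $|\chi_K(z_j)-\chi_{K_i}(z_j)|\leq 2\chi_{K\triangle K_i}(z_j)$ and the uniform bound on $h_Q(\theta^t.\cdot)$ over the bounded set; this gives an estimate that is uniform in $\theta$ and proportional to $\vol(K\triangle K_i)$, so uniform convergence of the support functions, and hence Hausdorff convergence, is immediate. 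You instead apply the dominated convergence theorem (relying on the same two facts: the integrand dominated on $R\B$, and $\chi_{K_i}\to\chi_K$ a.e.\ because $\partial K$ is Lebesgue-negligible) to get pointwise convergence of $h_{Z_p^m(K_i,Q)}$, and then need an Arzel\`a--Ascoli / uniform-Lipschitz step to promote this to uniform convergence. Both are correct; the paper's estimate is marginally more economical because it never separates pointwise from uniform convergence, while yours is a cleaner plug-and-play application of standard convergence theorems. No gap.
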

\begin{proof}
    For every $\theta\in\s$, one has
    \begin{align*}
        &|\vol(K)^{m+1}h^p_{Z^m_p (K,Q)}(\theta)-\vol(K_i)^{m+1}h^p_{Z^m_p (K_i,Q)}(\theta)|= 
        \\
        &\bigg|\int_{\M[n,1]}\chi_K(z_0)\cdots\int_{\M[n,1]}\chi_K(z_m)h_{Q}(\theta^t.\left(\{z_0-z_i\}_{i=1}^m\right))^pdz_m\cdots dz_1 dz_0 
        \\
        &- \int_{\M[n,1]}\chi_{K_i}(z_0)\cdots\int_{\M[n,1]}\chi_{K_i}(z_m)h_{Q}(\theta^t.\left(\{z_0-z_i\}_{i=1}^m\right))^pdz_m\cdots dz_1 dz_0\bigg|
        \\
        &\leq \int_{\M[n,1]}\chi_K(z_0)\cdots\int_{\M[n,1]}|\chi_K(z_m)-\chi_{K_i}(z_m)|h_{Q}(\theta^t.\left(\{z_0-z_i\}_{i=1}^m\right))^pdz_m\cdots dz_1 dz_0 
        \\
        &+ \int_{\M[n,1]}|\chi_K(z_0)-\chi_{K_i}(z_0)|\cdots\int_{\M[n,1]}\chi_{K_i}(z_m)h_{Q}(\theta^t.\left(\{z_0-z_i\}_{i=1}^m\right))^pdz_m\cdots dz_1 dz_0,
    \end{align*}
    and the claim then follows, as $|\chi_K(z_j)-\chi_{K_i}(z_j)|$ is bounded by $2\chi_{K_i\triangle K}(z_j)$ for all $j\in \{0,\dots,m\}$ and all $i$, and $h_Q(\theta^t.\cdot)$ can be taken to be uniformly bounded on $K\cup_{i} K_i$.
\end{proof}

\section{Main Theorem}

This section is dedicated to proving the following theorem; we recall that a convex body is said to be \textit{strictly convex} if its boundary does not contain a line segment.
\begin{theorem}
\label{t:main}
Fix $p\geq 1$, $m,n\in\N$ and $Q\in\conbodo[1,m]$. For $K\in\conbod[n,1]$, one has
$$\frac{\vol(Z_p^m(K,Q))}{\vol(K)} \geq \frac{\vol(Z_p^m(\B,Q))}{\vol(\B)}.$$
There is equality if $K$ is an ellipsoid. In fact, if $Q$ is strictly convex, then equality implies $K$ is an ellipsoid.
\end{theorem}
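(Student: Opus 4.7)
The plan is to mirror, in the higher-order setting, the $m=1$ strategy of Xi, Guo and Leng~\cite{XGL14}. By Proposition~\ref{p:affine} the ratio $\vol(Z_p^m(K,Q))/\vol(K)$ is invariant under $K \mapsto A.K$ for $A \in GL_n(\R)$, so we may normalize $\vol(K) = \vol(\B)$ and reduce the claim to $\vol(Z_p^m(K,Q)) \geq \vol(Z_p^m(\B, Q))$.

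Writing $q = nm+p$, we take volumes in the identity $Z_p^m(K,Q) = (\Vol(R^m_q K)/\vol(K)^m)^{1/p}\,\G R^m_q K$ and factor to obtain
\[
\frac{\vol(Z_p^m(K,Q))}{\vol(K)} = \frac{\vol(\G R^m_q K)}{\Vol(R^m_q K)^{1/m}} \cdot \left(\frac{\Vol(R^m_q K)}{\vol(K)^m}\right)^{n/p + 1/m}.
\]
Theorem~\ref{t:NewLpBPC} applied to $L = R^m_q K$ bounds the first factor from below, with equality if and only if $R^m_q K = \PP E$ for some origin-symmetric ellipsoid $E$. The second factor is controlled by a higher-order isoperimetric inequality for radial mean bodies, a natural extension of the $m=1$ result attributable to Gardner-Zhang~\cite{GZ98}:
\[
\frac{\Vol(R^m_q K)}{\vol(K)^m} \geq \frac{\Vol(R^m_q \B)}{\vol(\B)^m},
\]
with equality if and only if $K$ is an ellipsoid. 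Multiplying the two lower bounds gives the main inequality.

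Equality at ellipsoids is automatic from the $GL_n$-equivariance in Proposition~\ref{p:affine}. Conversely, under the hypothesis that $Q$ is strictly convex, equality in the main inequality must propagate to equality in both ingredients: Theorem~\ref{t:NewLpBPC} then yields $R^m_q K = \PP E$ for some origin-symmetric ellipsoid $E$, while the radial mean body inequality forces $K$ itself to be an ellipsoid.

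The principal new ingredient required is the higher-order radial mean body isoperimetric inequality stated above, for which a natural approach is Steiner symmetrization of the integral representation~\eqref{eq:radial_og} combined with the classical Brunn-Minkowski machinery. A subtlety to watch for is that, for general $Q$, the body $R^m_q \B$ need not be of the form $\PP E$ (as it carries a column-permutation symmetry that $\PP E$ does not inherit), so the lower bound from Theorem~\ref{t:NewLpBPC} is not individually tight at $K=\B$; nevertheless, the combined inequality is tight on ellipsoids by affine invariance, and this compatibility between the two equality cases must be verified carefully.
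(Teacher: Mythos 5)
Your factorization of $\vol(Z_p^m(K,Q))/\vol(K)$ is algebraically correct, and it is a genuinely different route from the paper. The paper Steiner-symmetrizes the mean zonoid directly: Lemma~\ref{l:set} gives the support-function inequality \eqref{eq:desired}, Proposition~\ref{p:steiner} upgrades it to the set inclusion $Z^m_p(S_uK,Q)\subseteq S_u Z^m_p(K,Q)$, and iterating symmetrization via Lemma~\ref{l:steiner} together with the continuity of Proposition~\ref{prop-conv} finishes. Your proposal instead tries to decouple the centroid-body contribution (controlled by Theorem~\ref{t:NewLpBPC}) from the radial-mean-body contribution, and this decoupling fails.

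There are two genuine gaps. First, the ``higher-order radial mean body isoperimetric inequality''
\[
\frac{\Vol(R^m_q K)}{\vol(K)^m}\geq\frac{\Vol(R^m_q\B)}{\vol(\B)^m}
\]
(at fixed $\vol(K)=\vol(\B)$) carries all the weight of your argument but is neither proved by you nor available in the literature you cite. The Gardner--Zhang results and their higher-order extensions \eqref{eq:radial_jensen}, \eqref{eq:higher_simplex} are set inclusions whose equality cases are simplices; they do \emph{not} give a volume lower bound for $R^m_q K$ with the ball as extremizer. What you are invoking would be a substantial new theorem in the spirit of Schneider's conjecture, and a one-line appeal to ``Steiner symmetrization of~\eqref{eq:radial_og}'' is not a substitute for a proof.

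Second, and more structurally, even granting the missing radial-mean-body inequality, your two lower bounds multiply to
\[
\frac{\vol(\G\PP\B)}{\Vol(\PP\B)^{1/m}}\,\left(\frac{\Vol(R^m_q\B)}{\vol(\B)^m}\right)^{n/p+1/m},
\]
whereas the theorem asserts the lower bound
\[
\frac{\vol(Z_p^m(\B,Q))}{\vol(\B)}=\frac{\vol(\G R^m_q\B)}{\Vol(R^m_q\B)^{1/m}}\,\left(\frac{\Vol(R^m_q\B)}{\vol(\B)^m}\right)^{n/p+1/m}.
\]
Applying Theorem~\ref{t:NewLpBPC} to $L=R^m_q\B$ itself shows $\frac{\vol(\G R^m_q\B)}{\Vol(R^m_q\B)^{1/m}}\geq\frac{\vol(\G\PP\B)}{\Vol(\PP\B)^{1/m}}$, with equality only if $R^m_q\B=\PP E$ for some origin-symmetric ellipsoid $E\in\conbodio[n,1]$. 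For $m\geq 2$ and general $Q$ this is false: $g_{\B,m}$ is invariant under permuting the columns $x_1,\dots,x_m$, so $R^m_q\B$ is invariant under right-multiplication by permutation matrices, while $\PP E$ enjoys no such symmetry unless $Q$ does. Hence for generic $Q$ your bound is strictly below the target already at $K=\B$, and the theorem does not follow. You flag this yourself, but it is not a ``compatibility of equality cases'' to be verified later; it is a strict inequality separating your conclusion from the statement, and the factorization strategy cannot be repaired by adjusting constants.
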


We recall \textit{Steiner symmetrization}: given $u\in\s$, the {\it Steiner symmetral} $S_uK$ of $K$ about $u^\perp$ constructs a rearrangement of $K$ that is symmetric about $u^\perp$. Specifically, write $K_u$ for the orthogonal projection of $K$ onto $u^\perp$. Then, we can write,
$$K=\left\{y^{\prime}+t u:-\underline{\ell}_u\left(K ; y^{\prime}\right) \leq t \leq \bar{\ell}_u\left(K ; y^{\prime}\right) \text { for } y^{\prime} \in K_u\right\},$$
where $\bar{\ell}_u\left(K ; y^{\prime}\right): K_u \rightarrow \mathbb{R}$ and $\underline{\ell}_u\left(K ; y^{\prime}\right): K_u \rightarrow \mathbb{R}$ are the {\it overgraph} and {\it undergraph} of $K$ respectively. If one sets, for $y\in K_u$, $$m_{y^{\prime}}(u)=m_{y^{\prime}}(K;u)=\frac{1}{2}\left[\bar{\ell}_u\left(K ; y^{\prime}\right)-\underline{\ell}_u\left(K ; y^{\prime}\right)\right],$$
then the midpoint of $K\cap (u\R + y^\prime)$ is $y^\prime + m_{y^{\prime}}(u)u$; it will be convenient to write $\text{vol}_{1}(K\cap (u\R + y^\prime))=\sigma_{y^\prime}(u)=\sigma_{y^\prime}$. The Steiner symmetral $S_u K$ is then constructed by moving $m_{y^{\prime}}(u)$ to the origin in $u\R$; in particular, this yields $(S_u K)_u = K_u$, $$\underline{\ell}_u\left(S_u K ; y^{\prime}\right)=\bar{\ell}_u\left(S_u K ; y^{\prime}\right)=\frac{1}{2}\left[\bar{\ell}_u\left(K ; y^{\prime}\right)+\underline{\ell}_u\left(K ; y^{\prime}\right)\right],$$ and, from the translation invariance of the Lebesgue measure, 
\begin{equation}
\label{eq:steiner}
\vol(S_u K)=\vol(K).
\end{equation}

The first step to proving Theorem~\ref{t:main} is to relate $Z_p^m(S_u K,Q)$ with $Z_p^m(K,Q)$. It will be convenient to write, for $z\in\M[n,1]$ and $u\in\s$, $z=(z^\prime,t)$, where $z^\prime\in u^\perp$ and $z=z^\prime + tu$. Of course, we are suppressing in this notation the dependence on $u$.
\begin{lemma}
\label{l:set}
    Fix $p \geq 1$, $m,n\in\N$ and $Q\in\conbodo[1,m]$. Then, for $u\in\s$ and $y_1^\prime,y_2^\prime\in u^\perp$, one has
    \begin{equation}h_{Z_p^m(S_u K,Q)}\left( \frac{y_1^\prime + y_2^\prime}{2},\pm 1\right) \leq \frac{h_{Z_p^m(K,Q)}(y_1^\prime, 1)+h_{Z_p^m(K,Q)}(y_2^\prime, -1)}{2}.
    \label{eq:desired}
    \end{equation}
    Suppose $Q$ is strictly convex. Then, there is equality if, and only if, all the midpoints of the chords of $K$ parallel to $u$ lie in a hyperplane.
\end{lemma}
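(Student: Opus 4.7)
The plan is to rewrite each support function via \eqref{eq:final_form}, perform a fiber-by-fiber change of variables that converts integrals over $K$ into integrals over $S_u K$, and then invoke pointwise convexity of $h_Q$ together with Minkowski's inequality in $L^p$.

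Decompose each $z_j \in \R^n$ as $z_j = (z_j', t_j)$ with $z_j' \in u^\perp$ and $t_j \in \R$. On the fiber $K \cap (z_j' + \R u)$, $t_j$ ranges over the interval $[m_{z_j'}(K;u) - \sigma_{z_j'}/2,\, m_{z_j'}(K;u) + \sigma_{z_j'}/2]$. In the integral representing $h_{Z_p^m(K,Q)}(y_1', 1)^p$, substitute $t_j = s_j + m_{z_j'}(K;u)$; in the integral representing $h_{Z_p^m(K,Q)}(y_2', -1)^p$, substitute $t_j = -s_j + m_{z_j'}(K;u)$. Both substitutions are unit-Jacobian and transform the domain into $(S_u K)^{m+1}$ in the $(z_j', s_j)$-coordinates. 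A direct computation shows that the arguments of $h_Q$ become the row vectors
\begin{align*}
A_1 &= \bigl(\langle y_1', z_0' - z_i'\rangle + (s_0 - s_i) + (m_{z_0'}(K;u) - m_{z_i'}(K;u))\bigr)_{i=1}^m,\\
A_2 &= \bigl(\langle y_2', z_0' - z_i'\rangle + (s_0 - s_i) - (m_{z_0'}(K;u) - m_{z_i'}(K;u))\bigr)_{i=1}^m,
\end{align*}
whose average
$$\frac{A_1 + A_2}{2} = \Bigl(\bigl\langle \tfrac{y_1'+y_2'}{2},\, z_0'-z_i' \bigr\rangle + (s_0 - s_i)\Bigr)_{i=1}^m$$
is precisely the argument of $h_Q$ in the integral representation of $h_{Z_p^m(S_u K, Q)}(\tfrac{y_1'+y_2'}{2}, 1)^p$ over $(S_u K)^{m+1}$.

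Pointwise convexity of $h_Q$ yields $h_Q(\tfrac{A_1+A_2}{2}) \leq \tfrac{1}{2}(h_Q(A_1) + h_Q(A_2))$; raising to the $p$-th power (valid since $h_Q \geq 0$ and $p \geq 1$), integrating, and applying Minkowski's inequality in $L^p((S_u K)^{m+1})$ gives
$$h_{Z_p^m(S_u K, Q)}\!\Bigl(\tfrac{y_1'+y_2'}{2}, 1\Bigr) \leq \tfrac{1}{2}\bigl(h_{Z_p^m(K,Q)}(y_1', 1) + h_{Z_p^m(K,Q)}(y_2', -1)\bigr).$$
The case $\theta = (\tfrac{y_1'+y_2'}{2}, -1)$ follows from the identity $h_{Z_p^m(S_u K, Q)}(y, -1) = h_{Z_p^m(S_u K, Q)}(y, 1)$, obtained by substituting $t_j \mapsto -t_j$ in the integral representation and using that $S_u K$ is invariant under reflection in $u^\perp$.

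For the equality case when $Q$ is strictly convex, both the pointwise convexity bound and Minkowski's inequality must be equalities almost everywhere. Note that $A_1 - A_2$ is independent of the $s$-variables while $A_1$ and $A_2$ share the same $s$-gradient, so they parametrize parallel affine lines in $\R^m$. Strict convexity of $Q$ makes $h_Q$ differentiable on $\R^m \setminus \{0\}$, and equality in the convexity bound along these parallel lines forces $\nabla h_Q(A_1) = \nabla h_Q(A_2)$ on an open $s$-set; exploiting the genuine $s$-variation then forces $A_1 = A_2$ a.e. This rewrites as $\langle y_1' - y_2', z_0' - z_i'\rangle + 2(m_{z_0'}(K;u) - m_{z_i'}(K;u)) = 0$ for a.e.\ $(z_0', \ldots, z_m')$, meaning $z' \mapsto m_{z'}(K;u)$ is affine on $K_u$, so the midpoints of the chords of $K$ parallel to $u$ lie in a common hyperplane. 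The main obstacle is this last step: ruling out the degenerate scenario in which $A_1 \neq A_2$ but both track through a common normal cone of a corner of $Q$, which uses the full $s$-variation over open subsets to conclude that such a configuration would force $A_1, A_2$ to lie on a common ray despite their fixed separation $A_1 - A_2$.
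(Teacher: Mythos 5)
Your derivation of the inequality is essentially the paper's argument: the same fibration over $u^\perp$, the same change of variables $t_j = m_{z_j'}\pm s_j$ converting $(K)^{m+1}$ into $(S_uK)^{m+1}$, the same identification of two integrands $A_1,A_2$ whose average is the $S_uK$-integrand, followed by pointwise sublinearity of $h_Q$ and Minkowski's integral inequality in $L^p$. The one cosmetic difference is that you prove the inequality for $\theta=(\tfrac{y_1'+y_2'}{2},+1)$ and deduce the $-1$ case from the $u^\perp$-symmetry of $S_uK$ (valid via Proposition~\ref{p:affine} with $A$ the reflection), whereas the paper carries both signs $\pm$ through the computation simultaneously; both are correct.

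Where you diverge from the paper is in the equality analysis under strict convexity of $Q$. You aim to conclude $A_1=A_2$ directly from $\nabla h_Q(A_1)=\nabla h_Q(A_2)$, and you honestly flag the obstruction: strict convexity of $Q$ gives differentiability of $h_Q$ on $\M[1,m]\setminus\{0\}$, but not injectivity of $\nabla h_Q$, since a strictly convex body may have corners whose normal cones are full-dimensional. The paper's route is slightly different: it invokes the equality case of Minkowski's integral inequality to produce a single constant $\lambda\ge 0$ with $h_Q(A_1)=\lambda h_Q(A_2)$ a.e., then asserts pointwise proportionality $A_1=\lambda A_2$ and pins down $\lambda=1$ by noting the left side of \eqref{eq:almost_plane} is $s$-independent while the right side is not unless $\lambda=1$; the hyperplane condition falls out. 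Your proposal, as written, leaves the corner-of-$Q$ scenario unresolved ("the main obstacle is this last step"); to finish you would need to actually carry out the $s$-variation argument that rules out $A_1\ne A_2$ persisting in a common normal cone over an open set, or else follow the paper's route through the Minkowski constant $\lambda$.
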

\begin{proof}
    From \eqref{eq:final_form}, we can write, by decomposing $z_i=z_i^\prime + t_iu$ and using Fubini's theorem
    \begin{align*}
        h^p_{Z^m_p (K,Q)}(y^\prime,\pm 1)&=\frac{1}{\vol(K)^{m+1}} \int_{K}\int_{K}\cdots\int_{K}h_Q((y^\prime,\pm 1)^t.\{z_0-z_i\}_{i=1}^m)^pdz_m\cdots dz_1 dz_0
        \\
        &=\frac{1}{\vol(K)^{m+1}} \int_{K_u}\int_{K_u}\cdots\int_{K_u}\int_{m_{z_0^\prime}-\frac{1}{2}\sigma_{z^\prime_0}}^{m_{z_0^\prime}+\frac{1}{2}\sigma_{z^\prime_0}}\cdots \int_{m_{z_m^\prime}-\frac{1}{2}\sigma_{z^\prime_m}}^{m_{z_m^\prime}+\frac{1}{2}\sigma_{z^\prime_m}} 
        \\
        &\quad\quad h_Q((y^\prime,\pm 1)^t.\{(z_0^\prime-z_i^\prime,t_0-t_i)\}_{i=1}^m)^p dt_m\cdots dt_1 dt_0 dz^\prime_m\cdots dz^\prime_1 dz^\prime_0
        \\
        &=\frac{1}{\vol(K)^{m+1}} \int_{K_u}\int_{K_u}\cdots\int_{K_u}\int_{m_{z_0^\prime}-\frac{1}{2}\sigma_{z^\prime_0}}^{m_{z_0^\prime}+\frac{1}{2}\sigma_{z^\prime_0}}\cdots \int_{m_{z_m^\prime}-\frac{1}{2}\sigma_{z^\prime_m}}^{m_{z_m^\prime}+\frac{1}{2}\sigma_{z^\prime_m}} 
        \\
        &\quad\quad h_Q(\{(y^\prime.(z_0^\prime-z_i^\prime) \pm (t_0-t_i))\}_{i=1}^m)^p dt_m\cdots dt_1 dt_0 dz^\prime_m\cdots dz^\prime_1 dz^\prime_0.
    \end{align*}

    By performing a variable substitution $s_i=\pm t_i \mp m_{z^\prime_i}$, this becomes
     \begin{align*}
        &h^p_{Z^m_p (K,Q)}(y^\prime,\pm 1)
        \\
        &=\frac{1}{\vol(K)^{m+1}} \int_{K_u}\int_{K_u}\cdots\int_{K_u}\int_{-\frac{1}{2}\sigma_{z^\prime_0}}^{\frac{1}{2}\sigma_{z^\prime_0}}\cdots \int_{-\frac{1}{2}\sigma_{z^\prime_m}}^{\frac{1}{2}\sigma_{z^\prime_m}} 
        \\
        &\quad\quad h_Q(\{(y^\prime.(z_0^\prime-z_i^\prime)+(s_0-s_i)\pm (m_{z_0^\prime} - m_{z_i^\prime}))\}_{i=1}^m)^p ds_m\cdots ds_1 ds_0 dz^\prime_m\cdots dz^\prime_1 dz^\prime_0.
    \end{align*}
    Using that $(S_u K)_u=K_u$ and Fubini's theorem again, we then obtain
    \begin{equation}
    \label{eq:steiner_cord}
    \begin{split}
        &h^p_{Z^m_p (K,Q)}(y^\prime,\pm 1)=\frac{1}{\vol(K)^{m+1}} \int_{S_u K}\int_{S_u K}\cdots\int_{S_u K}
        \\
        &\quad h_Q(\{(y^\prime.(z_0^\prime-z_i^\prime)+(s_0-s_i)\pm (m_{z_0^\prime} - m_{z_i^\prime}))\}_{i=1}^m)^p ds_mdz^\prime_m\cdots ds_1dz^\prime_1 ds_0dz^\prime_0 .
    \end{split}
    \end{equation}
    We next replace $K$ with $S_u K$. Recalling that $m_{z_i^\prime}$ is a function of $K$, and by definition $m_{z_i^\prime}=0$ for $S_u K$, we obtain
    \begin{align*}
        &h^p_{Z^m_p (S_u K,Q)}(y^\prime,\pm 1)=\frac{1}{\vol(K)^{m+1}} \int_{S_u K}\int_{S_u K}\cdots\int_{S_u K}
        \\
        &\quad h_Q(\{(y^\prime.(z_0^\prime-z_i^\prime)+(s_0-s_i))\}_{i=1}^m)^p ds_mdz^\prime_m\cdots ds_1dz^\prime_1 ds_0dz^\prime_0 .
    \end{align*}
    We then pick a $y^\prime$ of the form $\frac{1}{2}(y_1^\prime + y_2^\prime)$ to obtain
    \begin{align*}
        &2^ph^p_{Z^m_p (S_u K,Q)}\left(\frac{y_1^\prime + y_2^\prime}{2},\pm 1\right)=\frac{2^p}{\vol(K)^{m+1}} \int_{S_u K}\int_{S_u K}\cdots\int_{S_u K}
        \\
        &\quad h_Q\left(\left\{\left(\frac{y_1^\prime + y_2^\prime}{2}.(z_0^\prime-z_i^\prime)+(s_0-s_i)\right)\right\}_{i=1}^m\right)^p ds_mdz^\prime_m\cdots ds_1dz^\prime_1 ds_0dz^\prime_0 
        \\
        &=\frac{1}{\vol(K)^{m+1}} \int_{S_u K}\int_{S_u K}\cdots\int_{S_u K}
        \\
        &\quad h_Q(\{((y_1^\prime + y_2^\prime).(z_0^\prime-z_i^\prime)+2(s_0-s_i))\}_{i=1}^m)^p ds_mdz^\prime_m\cdots ds_1dz^\prime_1 ds_0dz^\prime_0
        \\
        &=\frac{1}{\vol(K)^{m+1}} \int_{S_u K}\int_{S_u K}\cdots\int_{S_u K}
        \\
        &\quad h_Q(\{((y_1^\prime + y_2^\prime).(z_0^\prime-z_i^\prime)+[(s_0-s_i) +(m_{z_0^\prime} - m_{z_i^\prime})] + [(s_0-s_i) -(m_{z_0^\prime} - m_{z_i^\prime})])\}_{i=1}^m)^p 
        \\
        &\quad\quad \times ds_mdz^\prime_m\cdots ds_1dz^\prime_1 ds_0dz^\prime_0 .
    \end{align*}
    From the 1-homogeneity and convexity of support functions, they are sublinear. Thus, we can continue:
    \begin{align*}
        &2^ph^p_{Z^m_p (S_u K,Q)}\left(\frac{y_1^\prime + y_2^\prime}{2},\pm 1\right)\leq \frac{1}{\vol(K)^{m+1}} \int_{S_u K}\int_{S_u K}\cdots\int_{S_u K}
        \\
        & \left(h_Q(\{(y_1^\prime.(z_0^\prime-z_i^\prime) +[(s_0-s_i) +(m_{z_0^\prime} - m_{z_i^\prime})] )\}_{i=1}^m)\right.
        \\
        &\quad\left.+h_Q(\{(y_2^\prime.(z_0^\prime-z_i^\prime)+ [(s_0-s_i) -(m_{z_0^\prime} - m_{z_i^\prime})])\}_{i=1}^m)\right)^p ds_mdz^\prime_m\cdots ds_1dz^\prime_1 ds_0dz^\prime_0 .
    \end{align*}
    We then take $p$th root of both sides and use Minkowski's integral inequality and \eqref{eq:steiner_cord} to obtain the desired inequality \eqref{eq:desired}. For the equality conditions, we must have equality in the sub-linearity of $h_Q$ and the use of Minkowski's integral inequality.

Suppose $Q$ is strictly convex. Then, both instances of equality imply there exists $\lambda \geq 0$ such that, for all $i=1,\dots,m$, one has
$$y_1^\prime.(z_0^\prime-z_i^\prime)+[(s_0-s_i) +(m_{z_0^\prime} - m_{z_i^\prime})]=\lambda (y_2^\prime.(z_0^\prime-z_i^\prime)+ [(s_0-s_i) -(m_{z_0^\prime} - m_{z_i^\prime})])$$
for fixed, but arbitrary $(z_0^\prime,s_0)\in K$ and all $(z_i^\prime,s_i)\in K$. Upon re-arrangment, this can be written as
\begin{equation}(y_1^\prime-\lambda y_2^\prime).(z_0^\prime-z_i^\prime)+(1+\lambda)(m_{z_0^\prime} - m_{z_i^\prime})=(\lambda-1) (s_0-s_i).
\label{eq:almost_plane}
\end{equation}
We now determine the value of $\lambda$. Notice that the left-hand-side of \eqref{eq:almost_plane} is independent of the $s_i$. In particular, from the fact that $K$ has non-empty interior, we can then vary the $z_i$ while keeping the $z^\prime_i$ fixed, so that the new $(z^\prime_i,s_i)$ are still in $K$. Consequently, \eqref{eq:almost_plane} still holding implies $\lambda=1$. With this choice of $\lambda$, \eqref{eq:almost_plane} reduces to the formula for a hyperplane, and the claim follows.
\end{proof}

Having related the support functions of $Z^m_p (K,Q)$ and $Z^m_p (S_u K,Q)$, we will show that $Z^m_p (S_u K,Q) \subseteq S_u Z^m_p (K,Q)$. This will be done by using the obvious fact that $K\subseteq L$ for two convex bodies $K$ and $L$ if, and only if, for every $u\in\s$, $\bar{\ell}_u\left(K ; \cdot\right)$ and $\underline{\ell}_u\left(K ; \cdot \right)$ are point-wise smaller than $\bar{\ell}_u\left(L ; \cdot \right)$ and $\underline{\ell}_u\left(L ; \cdot \right)$ respectively on $K_u$. We will need the following fact from \cite[Lemma 1.2]{LYZ10_2}; the relative interior of a set $L\subset \M[n,1]$ is denoted by $\text{relint}(L)$.

\begin{proposition}
\label{p:graphs}
    Let $K\in\conbodo[n,1]$. Then, for $u\in\s$, the overgraph and undergraph functions of $K$ in the direction $u$ at the point $y^\prime \in \text{relint} (K_u)$ are given by
    \begin{align}
        \bar{\ell}_u\left(K ; y^\prime\right) &=\min_{x^\prime \in u^\perp} \left\{h_K(x^\prime,1)-\langle x^\prime,y^\prime \rangle  \right\}
        \\
        \underline{\ell}_u\left(K ; y^\prime\right) &=\min_{x^\prime \in u^\perp} \left\{h_K(x^\prime,-1)-\langle x^\prime,y^\prime \rangle  \right\}.
    \end{align}
\end{proposition}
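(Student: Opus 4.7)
My plan is to prove each formula by a standard two-sided inequality: a one-line lower bound from the definition of the support function, and a matching upper bound realized by a supporting hyperplane at the extremal point of the chord $K \cap (y' + u\R)$.

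First, the easy direction for the overgraph. The point $p := y' + \bar{\ell}_u(K;y')\,u$ lies in $K$, so for every $x' \in u^\perp$ the definition of $h_K$ gives
\[
h_K(x',1) \;\geq\; \langle p,(x',1)\rangle \;=\; \langle y',x'\rangle + \bar{\ell}_u(K;y'),
\]
hence $\min_{x'\in u^\perp}\{h_K(x',1)-\langle x',y'\rangle\}\geq \bar{\ell}_u(K;y')$. Exactly the same computation applied to $q := y' - \underline{\ell}_u(K;y')\,u$ and the functional $(x',-1)$ gives the analogous bound for the undergraph.

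The nontrivial direction is to produce an $x' \in u^\perp$ attaining equality. Since $p \in \partial K$, there exists an outer normal $\xi \in \R^n\setminus\{0\}$ at $p$, meaning $h_K(\xi) = \langle p,\xi\rangle$. Decompose $\xi = x'_0 + \alpha u$ with $x'_0 \in u^\perp$ and $\alpha \in \R$. The key claim is that $\alpha > 0$. Since $p - \varepsilon u \in K$ for small $\varepsilon > 0$, the supporting hyperplane inequality forces $\alpha \geq 0$. To rule out $\alpha = 0$, I use that $y' \in \operatorname{relint}(K_u)$: if $\xi \in u^\perp$, then for any direction $w \in u^\perp$ the point $y' + \delta w$ lies in $K_u$ for small $|\delta|$, so there is some $(y'+\delta w, t) \in K$, and choosing $w$ with $\langle w, x'_0\rangle > 0$ would yield $\langle (y'+\delta w,t),\xi\rangle > \langle p,\xi\rangle$, contradicting $h_K(\xi) = \langle p,\xi\rangle$. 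Thus $\alpha > 0$, and rescaling $\tilde\xi := \xi/\alpha = (\tilde x', 1)$ gives
\[
h_K(\tilde x',1) \;=\; \tfrac{1}{\alpha}\,\langle p,\xi\rangle \;=\; \langle y',\tilde x'\rangle + \bar{\ell}_u(K;y'),
\]
so the infimum is attained and equals $\bar{\ell}_u(K;y')$. The undergraph formula is obtained by the mirror argument at $q$: the outer normal there has $\alpha < 0$ (same relint argument), and normalizing by $-\alpha$ produces a vector of the form $(\tilde x',-1)$ realizing equality.

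The only subtle point is the strict positivity of $\alpha$, which is precisely where the assumption $y' \in \operatorname{relint}(K_u)$ enters; everything else is the definition of $h_K$ together with the existence of a supporting hyperplane at a boundary point, which holds for any convex body.
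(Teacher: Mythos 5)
The paper does not prove this proposition --- it invokes it as a known fact from the cited Lutwak--Yang--Zhang reference (Lemma~1.2 there) --- so there is no internal proof to compare against. Your argument is correct and is the standard two-sided one: the lower bound follows from testing the definition of $h_K$ at the chord endpoint $p = y' + \bar{\ell}_u(K; y')\, u$, and the matching upper bound is obtained by rescaling an outer normal of $K$ at $p$ so that its $u$-component equals $1$. The one place worth making explicit is the assertion that $p - \varepsilon u \in K$ for small $\varepsilon > 0$ (which gives $\alpha \geq 0$): this requires the chord $K \cap (y' + u\R)$ to have positive length, and that, too, is a consequence of $y' \in \text{relint}(K_u)$ together with $K$ having nonempty interior. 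So the relative-interior hypothesis actually enters the argument at two points, not only in ruling out $\alpha = 0$. With that noted, the proof stands, and the undergraph case follows by the mirror argument exactly as you describe.
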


\begin{proposition}
\label{p:steiner}
    Fix $p \geq 1$, $m,n\in\N$ and $Q\in\conbodo[1,m]$. Then, for every $u\in\s$,
    $$Z^m_p (S_u K,Q) \subseteq S_u Z^m_p (K,Q).$$
      Suppose $Q$ is strictly convex. Then, there is equality if, and only if, all the midpoints of the chords of $K$ parallel to $u$ lie in a hyperplane.
\end{proposition}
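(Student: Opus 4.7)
The plan is to prove $Z^m_p(S_uK,Q)\subseteq S_u Z^m_p(K,Q)$ by showing that the overgraph and undergraph of $Z^m_p(S_uK,Q)$ in the direction $u$ are pointwise dominated by those of $S_u Z^m_p(K,Q)$, as the discussion preceding the statement suggests. The crucial observation is that, by Proposition~\ref{p:graphs}, the over- and undergraph of any convex body at $y^\prime\in u^\perp$ are Legendre-type minima of the support function evaluated at $(x^\prime,\pm 1)$, which is precisely the object controlled by Lemma~\ref{l:set}.

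For the overgraph, the definition of Steiner symmetrization combined with Proposition~\ref{p:graphs} gives
\begin{align*}
\bar\ell_u(S_u Z^m_p(K,Q); y^\prime)
&= \tfrac12\bigl[\bar\ell_u(Z^m_p(K,Q); y^\prime)+\underline\ell_u(Z^m_p(K,Q); y^\prime)\bigr]
\\
&=\tfrac12\min_{x_1^\prime\in u^\perp}\bigl\{h_{Z^m_p(K,Q)}(x_1^\prime,1)-\langle x_1^\prime,y^\prime\rangle\bigr\}
\\
&\quad+\tfrac12\min_{x_2^\prime\in u^\perp}\bigl\{h_{Z^m_p(K,Q)}(x_2^\prime,-1)-\langle x_2^\prime,y^\prime\rangle\bigr\}.
\end{align*}
For any $x_1^\prime,x_2^\prime\in u^\perp$, I would apply Lemma~\ref{l:set} with the $+1$ sign at the midpoint $\tfrac12(x_1^\prime+x_2^\prime)$, subtract $\tfrac12(\langle x_1^\prime,y^\prime\rangle+\langle x_2^\prime,y^\prime\rangle)$ from both sides, bound the resulting left side below by $\bar\ell_u(Z^m_p(S_uK,Q); y^\prime)$ via Proposition~\ref{p:graphs}, and then take the infimum on the right independently over $x_1^\prime$ and $x_2^\prime$. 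This yields $\bar\ell_u(Z^m_p(S_uK,Q); y^\prime)\leq \bar\ell_u(S_u Z^m_p(K,Q); y^\prime)$. The undergraph comparison is the same argument with the $-1$ case of Lemma~\ref{l:set}, and together they give the desired set inclusion.

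For the equality case when $Q$ is strictly convex: if the midpoints of the chords of $K$ parallel to $u$ all lie in a hyperplane, then Lemma~\ref{l:set} is an equality for every pair $(x_1^\prime,x_2^\prime)$, so every step above becomes an equality and the inclusion sharpens to identity. Conversely, an identity of bodies forces equality at the minimizers in each chain, which triggers the equality case in Lemma~\ref{l:set} and, since that condition is phrased as a property of $K$ alone, yields the hyperplane condition on the midpoints. The main obstacle I anticipate is just keeping track of which sign of Lemma~\ref{l:set} to invoke in each place and verifying that the two separate minima on the right-hand side line up correctly with the single minimum on the left after the change of variables $x^\prime = \tfrac12(x_1^\prime+x_2^\prime)$; no new ideas beyond Lemma~\ref{l:set} and Proposition~\ref{p:graphs} are needed.
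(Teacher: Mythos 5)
Your plan is correct and is essentially the argument given in the paper: both reduce the set inclusion to a pointwise comparison of over- and undergraph functions via Proposition~\ref{p:graphs}, feed the critical direction(s) into Lemma~\ref{l:set}, and then pass to the infimum. The only cosmetic difference is that the paper explicitly names the minimizers $z_1',z_2'$ where your version quantifies over arbitrary $x_1',x_2'$ and takes the infimum afterward; the equality discussion is also the same.
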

\begin{proof}
    Fix $y^\prime\in \text{relint}(Z^m_p (K,Q)_u)$. From Proposition~\ref{p:graphs}, there exist $z_1^\prime=z_1^\prime(y^\prime),z_2^\prime=z_2^\prime(y^\prime)\in u^\perp$ such that
\begin{align}
        \bar{\ell}_u\left(Z^m_p (K,Q) ; y^\prime\right) &= h_{Z^m_p (K,Q)}(z_1^\prime,1)-\langle z_1^\prime,y^\prime \rangle,
        \\
        \underline{\ell}_u\left(Z^m_p (K,Q) ; y^\prime\right) &= h_{Z^m_p (K,Q)}(z_2^\prime,-1)-\langle z_2^\prime,y^\prime \rangle.
    \end{align}
    Then, from Lemma~\ref{l:set},
    \begin{align*}
        \bar{\ell}_u\left(S_u (Z^m_p (K,Q)); y^\prime\right) &= \frac{1}{2}\left[\bar{\ell}_u\left(Z^m_p (K,Q); y^\prime\right)+\underline{\ell}_u\left(Z^m_p (K,Q); y^\prime\right)\right]
        \\
        &=\frac{1}{2}\left[h_{Z^m_p (K,Q)}(z_1^\prime,1) + h_{Z^m_p (K,Q)}(z_2^\prime,-1)\right]-\left\langle\left( \frac{z_1^\prime+z_2^\prime}{2}\right),y^\prime \right\rangle
        \\
        &\geq h_{Z_p^m(S_u K,Q)}\left( \frac{z_1^\prime + z_2^\prime}{2}, 1\right) -\left\langle\left( \frac{z_1^\prime+z_2^\prime}{2}\right),y^\prime \right\rangle
        \\
        &\geq \min_{x^\prime \in u^\perp}\left\{h_{Z^m_p (S_u K,Q)}(x^\prime,1)-\left\langle x^\prime,y^\prime \right\rangle\right\}
        \\
        &=\bar{\ell}_u\left(Z^m_p (S_u K,Q) ; y^\prime\right)
    \end{align*}
    and
    \begin{align*}
        \underline{\ell}_u\left(S_u (Z^m_p (K,Q)); y^\prime\right) &= \frac{1}{2}\left[\bar{\ell}_u\left(Z^m_p (K,Q); y^\prime\right)+\underline{\ell}_u\left(Z^m_p (K,Q); y^\prime\right)\right]
        \\
        &=\frac{1}{2}\left[h_{Z^m_p (K,Q)}(z_1^\prime,1) + h_{Z^m_p (K,Q)}(z_2^\prime,-1)\right]-\left\langle\left( \frac{z_1^\prime+z_2^\prime}{2}\right),y^\prime \right\rangle
        \\
        &\geq h_{Z_p^m(S_u K,Q)}\left( \frac{z_1^\prime + z_2^\prime}{2},- 1\right) -\left\langle\left( \frac{z_1^\prime+z_2^\prime}{2}\right),y^\prime \right\rangle
        \\
        &\geq \min_{x^\prime \in u^\perp}\left\{h_{Z^m_p (S_u K,Q)}(x^\prime,-1)-\left\langle x^\prime,y^\prime \right\rangle\right\}
        \\
        &=\underline{\ell}_u\left(Z^m_p (S_u K,Q) ; y^\prime\right).
    \end{align*}

For the equality conditions, notice there is equality if, and only if, there is equality in Lemma~\ref{l:set}.
    
\end{proof}
We next recall the following classical fact about Steiner symmetrization (see e.g. \cite[Theorem 6.6.6]{Web94}).
\begin{lemma}
\label{l:steiner}
    Let $K$ be a convex body in $\R^n$. Then, there exists a sequence of directions $\{u_j\}_{j=1}^\infty\subset \s$ such that, if we define $S_0 K=K$, $S_1 K = S_{u_1} K$ and $S_{j}K=S_{u_j} S_{j-1}K,$ then $S_j K \to B_K$ in the Hausdorff metric.
\end{lemma}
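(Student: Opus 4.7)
The plan is to combine Blaschke's selection theorem with a monotone Lyapunov functional---the second moment of inertia---and a density argument. I would begin by enumerating a countable dense subset $D\subset\s$ in such a way that each element of $D$ occurs infinitely often in the sequence $\{u_j\}_{j=1}^\infty$. A key preliminary observation is that Steiner symmetrization does not increase the circumradius about the origin, since $S_u(R\B)=R\B$ for every $u\in\s$ and $S_u$ preserves set inclusions; consequently, choosing any $R$ with $K\subseteq R\B$ gives $S_jK\subseteq R\B$ for all $j$. Combined with the volume preservation \eqref{eq:steiner}, the sequence $\{S_jK\}$ is confined to a fixed compact region, and hence is precompact in the Hausdorff metric by Blaschke's selection theorem.

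Next, I would introduce the second moment $I(M)=\int_M |x|^2\,dx$ and verify it is strictly decreased by Steiner symmetrization unless the body is already symmetric. Writing coordinates as $x=x'+tu$ with $x'\in u^\perp$ and integrating along chords, the $|x'|^2$ contribution is unchanged by $S_u$, while the $t^2$ contribution on a single chord $[-a,b]$ satisfies
\[
\tfrac{1}{3}\bigl(a^3+b^3\bigr)-\tfrac{1}{12}\bigl(a+b\bigr)^3=\tfrac{1}{4}(a-b)^2(a+b)\geq 0,
\]
with equality iff $a=b$. Integrating over $M_u$ shows $I(S_uM)\leq I(M)$, with equality iff $M=S_uM$. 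Since $I(S_jK)$ is monotone non-increasing and bounded below, it converges to some limit $I_\infty$.

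The crucial step is to show that any Hausdorff limit $L$ of $\{S_jK\}$ satisfies $S_uL=L$ for every $u\in\s$. Suppose $L$ is the limit of a subsequence $S_{j_k}K$. For each fixed $u_0\in D$, since $u_0$ recurs infinitely often in $\{u_j\}$, we may pass to a further subsequence along which $u_{j_k+1}=u_0$; the Hausdorff continuity of $S_{u_0}$ as an operator on convex bodies then yields $S_{j_k+1}K=S_{u_0}(S_{j_k}K)\to S_{u_0}L$. Because $I$ is Hausdorff-continuous on subsets of $R\B$ and both $I(S_{j_k+1}K)$ and $I(S_{j_k}K)$ converge to $I_\infty$, we obtain $I(S_{u_0}L)=I(L)$, whence $S_{u_0}L=L$ by strict monotonicity. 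The density of $D\subset\s$ together with the continuity of $u\mapsto S_uL$ upgrades this to $S_uL=L$ for every $u\in\s$.

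Finally, a convex body invariant under reflection through every hyperplane through the origin is $O(n)$-invariant (such reflections generate $O(n)$), and must therefore be a Euclidean ball centered at the origin; volume preservation then forces $L=B_K$. Since every Hausdorff-convergent subsequence of $\{S_jK\}$ shares the same limit $B_K$, the full sequence converges to $B_K$. The main technical obstacle I anticipate is establishing the Hausdorff continuity of $S_u$ (both in the body $M$ for fixed $u$ and in $u$ for fixed $M$); these are classical but are not supplied by the tools recalled earlier in the paper, and would need to be invoked by reference.
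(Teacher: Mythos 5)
The paper does not prove Lemma~\ref{l:steiner}; it simply cites \cite[Theorem 6.6.5]{Web94}. Your argument is therefore a self-contained reconstruction, and most of it is sound. In particular, the precompactness step (monotonicity of $S_u$ under set inclusion, fixed volume, Blaschke selection), the chord-wise moment identity
\[
\tfrac13\bigl(a^3+b^3\bigr)-\tfrac1{12}(a+b)^3=\tfrac14(a-b)^2(a+b),
\]
with its equality case, and the final step (a convex body invariant under all $S_u$ is $O(n)$-invariant and hence a centered ball, pinned down by volume) are all correct, and the continuity facts you flag as needing citation are indeed standard for $n$-dimensional bodies.

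There is, however, a genuine gap in the sentence ``we may pass to a further subsequence along which $u_{j_k+1}=u_0$.'' You have one infinite index set $A=\{j_k\}$ produced by Blaschke selection (so that $S_{j_k}K\to L$) and another infinite set $B=\{j:u_{j+1}=u_0\}$ produced by your recurrence assumption. Extracting the desired further subsequence requires $A\cap B$ to be infinite, but $A$ is dictated by the dynamics of the symmetrization process applied to $K$, over which you have no control at the moment you enumerate $D$, while $B$ is fixed in advance; there is nothing to prevent these from having finite (or empty) intersection. Concretely, if the iterates $S_jK$ happen to drift near $L$ only along indices $j$ for which $u_{j+1}$ is far from $u_0$, your argument says nothing about whether $L$ is symmetric about $u_0^\perp$. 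This coordination problem is precisely the delicate point in the convergence theory of Steiner symmetrizations along a prescribed (non-adaptive) sequence of directions, and density plus infinite recurrence does not, by itself, deliver what you need. The textbook route the paper defers to avoids the issue entirely by choosing the directions adaptively---for instance, at each step selecting $u_{j+1}$ so as to (nearly) minimize $I(S_{u}S_jK)$ over $u\in\s$, or to bisect a longest chord---which forces a definite decrease of the Lyapunov functional whenever the current body is far from a ball, and then the limit is identified as before. To make your proof correct, either switch to such an adaptive choice, or supply the missing argument that your particular enumeration coordinates with the subsequence from Blaschke selection; the latter is substantially harder and would require more than density and recurrence.
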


The proof of Theorem~\ref{t:main} is now immediate.
\begin{proof}[Proof of Theorem~\ref{t:main}] Let $\{u_j\}$ be the sequence of directions from Lemma~\ref{l:steiner}. Then, from \eqref{eq:steiner}, Proposition~\ref{p:steiner}, and Proposition~\ref{prop-conv},
\begin{align*}\vol(Z^m_p (K,Q)) &= \vol(S_{u_1} Z^m_p (K,Q)) \geq \vol(Z^m_p (S_1 K,Q))
\\ 
&= \vol(S_{u_2} Z^m_p (S_ 1K,Q)) \geq \vol(Z^m_p (S_2 K,Q))
\\
&\vdots
\\
&\geq \vol(Z^m_p (B_K,Q)).
\end{align*}
Next, from Proposition~\ref{p:affine}, we can write $Z^m_p (B_K,Q) = Z^m_p (\vol(\B)^{-1/n}\vol(K)^{1/n} \B,Q) = (\frac{\vol(K)}{\vol(\B)})^{\frac{1}{n}} Z^m_p (\B,Q)$.

Finally, for equality conditions, clearly from direct substitution and Proposition~\ref{p:affine}, there is equality for $K$ an ellipsoid. Now, suppose there is equality. Then, there is equality in Proposition~\ref{p:steiner} for every $u$. Thus, if $Q$ is strictly convex, there is equality if, and only if, for every $u$, the chords of $K$ parallel to $u$ have midpoints lying in a hyperplane, which characterizes an ellipsoid.
\end{proof}
Notice that we can use \eqref{eq:def} to write the result from Theorem~\ref{t:main} as
$$\left(\frac{\Vol(R^m_{nm+p} K)}{\vol(K)^m}\right)^\frac{n}{p}\frac{\vol(\G R^m_{nm+p} K)}{\vol(K)} \geq \left(\frac{\Vol(R^m_{nm+p} \B)}{\vol(\B)^m}\right)^\frac{n}{p}\frac{\vol(\G R^m_{nm+p} \B)}{\vol(\B)}.$$
We can combine this inequality with Theorem~\ref{t:NewLpBPC} to obtain
\begin{align*}
\left(\frac{\Vol(R^m_{nm+p} K)}{\vol(K)^m}\right)^\frac{n}{p}\frac{\vol(\G R^m_{nm+p} K)}{\vol(K)} \geq &\left(\frac{\Vol(R^m_{nm+p} \B)}{\vol(\B)^m}\right)^\frac{n}{p}
\\
&\times\left(\frac{\Vol(R^m_{nm+p} \B)}{\Vol(\PP \B)}\right)^{\frac{1}{m}}\frac{\vol(\G \PP \B)}{\vol(\B)},
\end{align*}
with equality if, and only if when $Q$ is strictly convex, $K$ is an ellipsoid (since $\G \PP E$ is an ellipsoid when $E$ is an an ellipsoid, see \cite[Lemma 3.12]{HLPRY23_2}).

\section{Classification Results}
Recall that a zonoid can be defined as cosine transformation of a Borel measure on a sphere. That is, a convex body $M\subset\R^n$ is a \textit{zonoid} if there exists a Borel measure $\nu$ on $\s$ such that
$$h_{M}(\theta)=\int_{\s}|\langle u,\theta \rangle|d\nu(u).$$ Notable examples include projection bodies $\Pi K$, with $\nu=\frac{\sigma_K}{2}$ (in fact, every zonoid is a translate of a projection body) and mean zonoids $\tilde Z K$ given by \eqref{eq:zhang} (which was verified by Zhang \cite{Zhang91}).

Lutwak, Yang and Zhang, among others, studied \cite{LYZ04_2,LYZ05} $L^p$ zonoids, where $M$ is an $L^p$ zonoid, $p\geq 1$, if there exists a Borel measure $\nu$ on $\S$ such that
$$h_{M}(\theta)^p=\int_{\s}|\langle u,\theta\rangle|^pd\nu(u).$$
Notable examples of $L^p$ zonoids include $L^p$ projection bodies $\Pi_p K$ introduced by Lutwak, Yang and Zhang \cite{LYZ00} (with $\nu$ being, up to a constant, $\sigma_{K,p}$) and the $L^p$ mean zonoids $\tilde Z_p K$ given by \eqref{eq:xgl} (which was verified by Xi, Guo and Leng \cite{XGL14}).

We now introduce the concept of zonoids in the higher-order setting. Interestingly enough, we are in position to define two such types of zonoids, one in $\M[n,m]$ and one in $\M[n,1]$.
\begin{definition}
    Fix $Q\in\conbodo[1,m]$ and $p\geq 1$. Then, we say a convex body $L\in\conbodo[n,m]$ is a higher-order $(L^p,Q)$ zonoid if there exists a Borel measure $\nu$ on $\s$ such that
    $$h_{L}^p(x)=\int_{\s}h_Q(u^t.x)^pd\nu(u).$$
\end{definition}
Clearly, $(L^p,Q)$ projection bodies $\P K$ are higher-order $(L^p,Q)$ zonoids, with $\nu=\sigma_{K,p}$. The bodies $Z^m_p(K,Q)$ do not fit in this framework, since $Z^m_p(K,Q)\in\conbodo[n,1]$.
\begin{definition}
    Fix $Q\in\conbodo[1,m]$ and $p\geq 1$. Then, we say a convex body $M\in\conbodo[n,1]$ is an $(L^p,Q)$ zonoid if there exists a Borel measure $\nu$ on $\S$ such that
    $$h_{M}^p(\theta)=\int_{\S}h_Q(\theta^t.u)^pd\nu(u).$$
\end{definition}
We conclude by showing that $Z^m_p(K,Q)$ are $(L^p,Q)$ zonoids.
\begin{proposition}
    Fix $p\geq 1,m,n\in\N$ and $Q\in\conbodo[1,m]$. For $K\in\conbod[n,1],$ one has that $Z^m_p(K,Q)$ are $(L^p,Q)$ zonoids.
\end{proposition}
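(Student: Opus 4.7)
The plan is to start from formula \eqref{eq:pretty} expressing $h^p_{Z^m_p(K,Q)}(\theta)$ as an integral over $D^m(K) \subset \M[n,m]$, pass to polar coordinates on $\R^{nm}$ to separate the radial and spherical variables, and absorb the radial data into a Borel measure $\nu$ on the unit sphere of $\M[n,m]$. Since the integrand already contains $h_Q(\theta^t.x)^p$ and the $1$-homogeneity of $h_Q$ allows the radial factor to be pulled out cleanly, the required representation appears for free.

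Concretely, I would write $x = ru$ with $u \in \S$ and $r \in [0,\rho_{D^m(K)}(u)]$, so that $dx = r^{nm-1}\,dr\,du$. Using $h_Q(\theta^t.(ru))^p = r^{p}\, h_Q(\theta^t.u)^p$, formula \eqref{eq:pretty} becomes
$$h^p_{Z^m_p(K,Q)}(\theta) = \frac{1}{\vol(K)^{m+1}} \int_{\S} h_Q(\theta^t.u)^p \left(\int_0^{\rho_{D^m(K)}(u)} g_{K,m}(ru)\, r^{nm+p-1}\, dr\right) du.$$
This is exactly the manipulation already carried out in the introduction when computing $h^p_{\Gamma_{Q,p} R^m_{nm+p} K}(\theta)$, so at this stage I would simply invoke that computation rather than redo it.

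By the definition of the $(nm+p)$th radial mean body,
$$\int_0^{\rho_{D^m(K)}(u)} g_{K,m}(ru)\, r^{nm+p-1}\, dr = \frac{\vol(K)}{nm+p}\, \rho_{R^m_{nm+p}K}(u)^{nm+p}.$$
Substituting in and collecting constants gives
$$h^p_{Z^m_p(K,Q)}(\theta) = \int_{\S} h_Q(\theta^t.u)^p\, d\nu(u), \qquad d\nu(u) := \frac{\rho_{R^m_{nm+p}K}(u)^{nm+p}}{(nm+p)\,\vol(K)^m}\, du,$$
which is a finite nonnegative Borel measure on the unit sphere of $\M[n,m]$, exhibiting $Z^m_p(K,Q)$ as an $(L^p,Q)$ zonoid. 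The argument presents no real obstacle; the only subtlety worth flagging is that the ambient sphere hosting $\nu$ is $\S = \mathbb{S}^{nm-1}$ (so that $\theta^t.u \in \M[1,m]$ is a legitimate argument of $h_Q$), matching the structure of the earlier higher-order definitions.
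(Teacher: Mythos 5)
Your proposal is correct and follows essentially the same route as the paper: both start from \eqref{eq:pretty}, pass to polar coordinates while exploiting the $1$-homogeneity of $h_Q$, apply the definition of the $(nm+p)$th radial mean body, and read off the measure $d\nu(u)=\rho_{R^m_{nm+p}K}(u)^{nm+p}\,du/[(nm+p)\vol(K)^m]$ on $\S$. Nothing further to add.
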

\begin{proof}
    This follows from the representation of $Z^m_p(K,Q)$ implied by \eqref{eq:pretty}:
    \begin{align*}
    h^p_{Z^m_p (K,Q)}(\theta)&=\frac{1}{nm+p}\frac{1}{\vol(K)^{m}}\int_{\S}h_Q(\theta^t.u)^p\rho_{R^m_{nm+p} K}(u)^{nm+p}du.
\end{align*}
Setting $d\nu(u) = \frac{\rho_{R^m_{nm+p} K}(u)^{nm+p}du}{(nm+p)\vol(K)^{m}}$ yields the result.
\end{proof}

{\bf Funding:} Dylan Langharst was funded by the U.S.-Israel Binational Science Foundation (BSF) Grant 2018115 and the Fondation Sciences Math\'ematiques de Paris Postdoctoral program. Dongmeng Xi was supported, in part, by the Austrian Science Fund (FWF): 10.55776/P34446, NSFC 12322103 and NSFC 12071277.

{\bf Acknowledgements: } We are thankful to the referee, who gave a very detailed report of the draft of this work which vastly improved the presentation of the material. Our thanks also to the editor.




\bibliographystyle{acm}
\bibliography{references}
\end{document}